\numberwithin{equation}{section}
\def\ca{{\mathcal A}}
\def\cc{{\mathcal C}}
\def\bc{{\mathbb C}}
\def\bt{{\mathbb T}}
\def\bz{{\mathbb Z}}
\def\a{\alpha}
\def\eps{\varepsilon}
\def\d{\delta}
\def\g{\gamma}
\theoremstyle{plain}
\newtheorem{lemma}{Lemma}[section]
\newtheorem{proposition}[lemma]{Proposition}
\newtheorem{theorem}[lemma]{Theorem}
\newtheorem{lem}{Lemma}[section]
\theoremstyle{definition}
\newtheorem{definition}[lemma]{Definition}
\newtheorem*{definition*}{Definition}
\newtheorem{defn}[lem]{Definition}
\theoremstyle{remark}
\begin{document}

\title[Hadamard and block  Schur products ] {\textsc{ The  Hadamard  Product in  a Crossed Product C*-algebra  }  }

\author{Erik Christensen}
\address{\hskip-\parindent
Erik Christensen, Institut for Mathematiske Fag, University of Copenhagen,
Copenhagen, Denmark.}
\email{echris@math.ku.dk}
\subjclass[2010]{ Primary: 15A69, 22D15, 46L05. Secondary: 43A30, 47L65,  81T05.}
\keywords{ Schur product, Hadamard product, crossed product, C*-algebra, Stinespring representation, Plancherel }

\begin{abstract}
Given two $n \times n $  matrices $A = (a_{ij})$ and $B=(b_{ij}) $ with entries in $B(H)$ for some Hilbert space $H,$ their {\em block Schur product} is the  $n \times n$ matrix $ A\square B := (a_{ij}b_{ij}).$ Given two continuous functions $f,g$ on the torus with Fourier coefficients $(f_n), \, (g_n)$ their convolution product $f \star g$  has Fourier coefficients $(f_ng_n).$ Based on this, the  {\em  Schur product }  on scalar matrices  is also known as the {\em  Hadamard product } 

We show that for a C*-algebra $\ca,$ and  a discrete group $G $ with an action $\a_g$ of $G$ on $\ca$ by *-automorphisms, the reduced  crossed product C*-algebra C$^*_r(\ca, \a, G)$ possesses a natural generalization of the convolution product, which we suggest should be named  {\em the  Hadamard product. } 

We show that this product has a natural Stinespring representation and we lift some known results on block Schur products to this setting, but we also show that the  block Schur product is a special case of the Hadamard product in a crossed product algebra.     
\end{abstract}

\date{\today}
\maketitle

\section{Introduction}

Based on an interest in properties of  spectral triples, which is a basic object in Connes' { \em noncommutative geometry, } we have been studying the block Schur product - the entry wise product  on infinite matrices over $B(H)$ - for some time \cite{C0, C1, C2}. The leading expert in this field, Roger A. Horn has always used the term {\em Hadamard  product } and he explains in \cite{Ho} p. 92 - 93 and \cite{HJ} p. 302 - 303 the reason why. I will quote Halmos' answer to Horn, when Horn asked Halmos,  why he used the term  {\em Hadamard product. } Halmos answered {\em because Johnny said so.} To  those who may not know, I can tell, that P. R.  Halmos was John von Neumann's assistant in Princeton. 
This is of course a perfectly good reason, but for a person coming from outside, this seemed a bit unfair towards Schur, who actually studied this product in details  and published the  fundamental article on  this product on bilinear forms in \cite{Sc}. Hadamard's product \cite{Ha}   is based on a product of functions represented by Laurent series, and it is obtained  via element-wise-products of the coefficients. Schur's product is really based on a study of  properties of matrices, so I found that in my studies, Schur's approach seemed to fit me best.
 
  The present article studies the  reduced crossed product $\cc := $C$^*_r(\ca, \a, G)$  of a C*-algebra $\ca$ by an action $\a$  of a discrete group $G,$ a construction you may find described in details in  \cite{KR} Definition 13.1.1. In this version of the  reduced crossed product, the C*-algebra $\ca$ acts on a Hilbert space $H,$  there is an injective  *-representation $\Psi$ of $\ca$ on the Hilbert space $H \otimes \ell_2(G)$  and a unitary representation $g \to L_g$ of $G$ on the same Hilbert space. The reduced crossed product $\cc$  is the C*-algebra generated by all the operators $\Psi(A)$ and $L_g.$ There exists a faithful conditional expectation $\pi$  of $\cc$ onto $\Psi(\ca)$ which satisfies $\pi(L_g\Psi(A)) = \d(e,g)\Psi(a).$ The mapping $\pi$ makes it possible  to describe elements in $\cc$ via generalized Fourier series over $G$ with coefficients in $\Psi(\ca)$  such as $$\cc \ni X:  X  \sim \sum_{g \in G}L_g X_g \text{ and } X_g := \pi(L_g^*X), \, X_g \in \Psi(\ca).$$ It is known, that this sum is convergent in a topology, which depends on $\pi,$ and we will show in Section 2 that this convergence result may be viewed as a direct generalization of the classical Plancherel theorem.   We study a product $\star$ on $\cc$ which is usually called the convolution product and it is defined at the level of the generalized Fourier series via the formula
  $$ \forall X \sim \sum_{g \in G } L_gX_g, \, Y \sim \sum_{g \in G} L_gY_g: \quad X \star Y \sim \sum_{g \in G} L_g X_gY_g.$$ In this article we show that this product has a natural  {\em Stinespring representation } as a {\em completely bounded bilinear operator } in the sense of \cite{CS}.  On the other hand the product is a straight forward generalization of the product Hadamard studied in \cite{Ha}. Since  the convolution operation is defined via an integral over the dual group, the  convolution   does not always exist in the setting of a general discrete group, so we think it is reasonable to call this product the Hadamard product. 
 Later on we show that the block Schur product is a special case of this product.  In the end, it turns out that  the results we have obtained for block Schur products during the last couple of years \cite{C1, C2} do extend to this Hadamard product in a reduced crossed product C*-algebra.

As mentioned above, we   
 show that the Hadamard product  - seen as a bilinear operator on $\cc \times \cc $ - has a natural Stinespring representation $$ X \star Y \, = \, V^*\rho(X) F \rho(Y) V, $$ where $V$ is an isometry, $\rho$ a representation of $\cc$  and $F$  a self-adjoint unitary. It is worth to remark that the conditional expectaion $\pi$ of $\cc$ onto $\Psi(\ca)$ is given via the same  representation $\rho$ and the same isometry $V$ as $$\pi(X) \, = \, V^*\rho(X) V.$$ 
  This decomposition of the  bilinear operator $\star$ has some similarities with the description of a bilinear form on $\bc^n$ via a scalar matrix, and in this analogy we can see that the Hadamard product is a symmetric operator. It is not positive, since $F$ is  a non trivial  self-adjoint unitary, and  then we get
 
  $$  \forall X \in \cc: \quad  -\pi(X^*X)\leq X^* \star X \leq  \pi(X^*X)$$
 Since the Stinespring decompositions of both $\star$ and the conditional expectation $\pi$ are based on the same representation $\rho $ and the same outer multipliers $V^*$ and $V,$ it is easy to obtain the following identity
 \begin{align*}  &\forall X, Y  \in \cc \, \exists S(X,Y) \in B\big( H \otimes \ell_2(G)\big), \,\|S(X,Y)\| \leq 1: \\   & X \star Y\, = \pi(XX^*)^{(1/2)} S(X,Y) \pi(Y^*Y)^{(1/2)},
 \end{align*}
 and from here follows immediately that Livshits' inequality \cite{Li} extends to the Hadamard product as 
 $$  \forall X, Y  \in \cc: \quad  \|X \star Y\|\, \leq  \|\pi(XX^*)^{(1/2)}\|\|  \pi(Y^*Y)^{(1/2)}\|,$$
 and this is a generalization of the classical inequality from Fourier analysis
 $$\| g \star h\|_\infty \, \leq \, \|g\|_2 \|h\|_2.$$

In the proofs we use some existing theory on the reduced crosssed product of a C*-algebra by a discrete group, and in section 2 we present the rsults we need in a way which demonstrates that they actually form a quite natural extension of Plancherel's theorem.

\section{Extension of the Plancherel Theorem} 

The content of this section is not new and it is closely connected to the fundamental work \cite{Ze} by Zeller-Meier on crossed products. We are in need of the results  presented here to show our main results in Section 3, and in the process of writing this down, we realized that the statements may be known by many people, so with the help of Erik B{\'e}dos, we found the needed results  in the articles \cite{BC1, BC2} by B{\'e}dos and Conti and the article \cite{RS} by R{\o}rdam and Sierarkowski. Nevertheless we have included our own proofs of the propositions \ref{norm} and \ref{normconv} because we think that they illustrate our point of view, namely that the results form a clear extension of the classical results named {\em Parceval's theorem } and {\em  Plancherel's theorem.} We think that this aspect is most easily demonstrated, when the generalized Fourier series are written in the form $ X \sim \sum_g  L_gX_g$ rather than $X \sim \sum_gX_gL_g,$ which is used in most articles. This is nothing but a notational difference, which can not be detected in classical Fourier analysis.   

We will consider a C*-algebra $\ca$ acting on a Hilbert space $H$ and an action, by *-automorphisms, $\a_g$ on $\ca $ by  a discrete group $G.$ We will use the notation of \cite{KR} Definition 13.1.1 and define the  reduced left crossed product $$\cc \,:= \, \mathrm{C}^*_r(\ca, \a_g, G)$$ as the C*-algebra generated by the operators on $H \otimes \ell_2(G)$ given by $$ \Psi(A) := \sum_{g \in G} \a_g^{-1}(A) \otimes E_g, A \in \ca, \,  L_g := I \otimes l_g, \, g \in G. $$ It is well known that there is a faithful conditional expectation $\pi$ of $\cc$ onto $\Psi(\ca)$ which satisfies $$\pi(L_g\Psi(A)) \, = \, \delta(g,e) \Psi(A).$$ It is worth to remark that by \cite{BC1} Proposition 3.1, which is based on \cite{Ze}, any faithful covariant representation of the C*-dynamical system $(\ca, \a_g, G)$  which has a faithful conditional expectation from the crossed product C*-algebra onto its copy of $\ca $ will be an isomorphic copy of the reduced crossed product we look at here. Since our starting point is the concretely given C*-algebra  $\ca$ acting on  $H$ we are studying the left regular reduced crossed product, but we have chosen a specific representation of it.  Before we present the results  we will like to mention the text books \cite{Pe, Wi} by G. K. Pedersen and D. Williams, respectively,  on crossed product C*-algebras.

   In \cite{KR} the authors  study von Neumann algebras, whereas we  study  C*-algebras, and we want to emphasize that there is a difference. In particular the proposition \ref{normconv} does not hold in a von Neumann algebraic setting, and you may look in \cite{Me} by Mercer, who presents examples of {\em non convergence } in a   crossed product of a von Neumann algebras by a discrete group. 

The classical Fourier theory is a special case of the left regular crossed product of a discrete  C*-dynamical system. In that case the algebra $\ca$ is just the scalars $\bc,$ the group is $\bz$ and the action is trivial. The Hilbert space is $L^2(\bt)$ and the unitaries $L_n$ are the multiplication operators $M_{z^n} $ which multiply an $L^2(\bt)$ function by $z^n.$ The C*-algebra $\cc$ then consists of the multiplication operators $M_f,$ where $f$ is a continuous function on $\bt.$  The conditional expectation $\pi$ is a state in this case. It is given by  $$\pi(M_f) \, = \, \frac{1}{2\pi}\int_{- \pi}^{\pi}f(e^{i\theta})d\theta,$$ and  we find that the $n$'th
Fourier coefficient of $f$ is given by $ f_n = \pi(L_n^*M_f),$
so we will use the following terminology.

\begin{definition}
For an $X$ in $\cc$  and a $g$ in $G$  we define the coefficient $X_g$ of $ X$ as $X_g := \pi (L_g^*X).$ 
\end{definition}
Staying in the case of the continuous functions on the circle, but focussing on the conditional expectation $\pi,$ we can reformulate Parceval's   identity for Fourier series of square integrable $2\pi-$periodic functions   to  obtain \begin{align} &\label{Plancherel}  \text{ \bf Plancherel's Thm.} \quad \pi(M_f^*M_f) \, = \, \|f\|_2^2 \, = \, \sum_{n \in \bz} |f_n|^2 \, = \, \sum_{ n \in \bz}\pi(L_n^*M_f)^*\pi(L_n^*M_f),\\
&\label{Nearest}  \text{ \bf Nearest point Thm.}\quad \forall K \subseteq \bz\, \forall g \in \mathrm{span}(\{ z^k\, : \, k \in K\,\}):  \\ & \pi\big((M_f - \sum_{h \in K}f_hM_{z^h})^*(M_f - \sum_{k \in K}f_kM_{z^k})\big) \, \leq \,  \pi\big((M_f - M_g)^*(M_f - M_g)\big) .\notag  
\end{align}
and we will show that these formulas hold in our setting too. 
We start by defining the analogue to the square integrable norm.

\begin{proposition} \label{norm} The expression  $$ \forall X \in \cc: \, \,\|X\|_{\pi} := \|\pi(X^*X)\|^{(1/2)} $$ defines a norm on $\cc.$  
\end{proposition}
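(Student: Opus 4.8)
The plan is to check the three elementary axioms of a norm --- finiteness together with positivity, positive homogeneity, and definiteness --- and then the triangle inequality, with essentially all of the content sitting in the last of these.

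Being a conditional expectation, $\pi$ is positive, so $\pi(X^*X)\ge 0$ for every $X\in\cc$ and $\|X\|_\pi=\|\pi(X^*X)\|^{(1/2)}$ is a well-defined nonnegative real number. Positive homogeneity is immediate, since $\pi\big((\l X)^*(\l X)\big)=|\l|^2\,\pi(X^*X)$ yields $\|\l X\|_\pi=|\l|\,\|X\|_\pi$ for $\l\in\bc$. Definiteness is exactly where the hypothesis that $\pi$ is \emph{faithful} enters: if $\|X\|_\pi=0$ then $\pi(X^*X)=0$, and faithfulness of $\pi$ forces $X^*X=0$, hence $X=0$.

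The triangle inequality is the substantive step, and I expect it to be the main obstacle, because --- in contrast with the scalar case --- one cannot simply expand $\|X+Y\|_\pi^2$ and control the cross terms $\pi(X^*Y)+\pi(Y^*X)$ by submultiplicativity of the norm on $\Psi(\ca)$. My approach is to reduce to the classical inequality for a scalar positive-semidefinite form by passing through a norming functional. Fix $X,Y\in\cc$ and put $z:=\pi\big((X+Y)^*(X+Y)\big)\ge 0$. Choose a positive linear functional $\f$ on $\cc$ with $\|\f\|\le 1$ and $\f(z)=\|z\|$; such a $\f$ exists, e.g.\ as the restriction to $\cc$ of a state of the unitization $\widetilde{\cc}$ that attains $\|z\|$ at $z$. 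Consider the sesquilinear form $\langle U,W\rangle_\f:=\f\big(\pi(U^*W)\big)$ on $\cc$. Since both $\pi$ and $\f$ are positive and $*$-preserving, this form is conjugate-symmetric and positive semidefinite, so the associated seminorm $\|U\|_\f:=\f\big(\pi(U^*U)\big)^{(1/2)}$ satisfies the usual Cauchy--Schwarz estimate, hence the triangle inequality $\|U+W\|_\f\le\|U\|_\f+\|W\|_\f$. Now $\|\f\|\le 1$ gives $\|U\|_\f^2=\f\big(\pi(U^*U)\big)\le\|\pi(U^*U)\|=\|U\|_\pi^2$ for all $U$, while the choice of $\f$ gives the exact equality $\|X+Y\|_\f^2=\f(z)=\|z\|=\|X+Y\|_\pi^2$. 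Chaining these, $\|X+Y\|_\pi=\|X+Y\|_\f\le\|X\|_\f+\|Y\|_\f\le\|X\|_\pi+\|Y\|_\pi$, which is the desired inequality.

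As a remark I would mention an equivalent viewpoint: $(X,Y)\mapsto\pi(X^*Y)$ is a $\Psi(\ca)$-valued pre-inner product making $\cc$ into a pre-Hilbert C*-module over $\Psi(\ca)$, and $\|\cdot\|_\pi$ is precisely its module norm; the general theory of Hilbert C*-modules then gives at once that it is a norm, with faithfulness of $\pi$ accounting for definiteness. I would keep the first, self-contained argument in the text, but in either formulation the only real difficulty is the same: one must collapse the $\Psi(\ca)$-valued quantity $\pi\big((X+Y)^*(X+Y)\big)$ to a scalar by a norming functional before the classical positive-form estimate becomes available.
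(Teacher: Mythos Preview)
Your proof is correct, but it takes a genuinely different route from the paper's. The paper invokes Stinespring's theorem to write $\pi(X)=V^*\rho(X)V$ and then observes the identity $\|X\|_\pi=\|\rho(X)V\|$; since $X\mapsto\rho(X)V$ is linear and $\|\cdot\|$ is an operator norm, the seminorm axioms are automatic, with faithfulness of $\pi$ giving definiteness. Your argument instead verifies the axioms by hand and handles the triangle inequality via a norming positive functional, which is essentially the standard Hilbert C*-module proof you mention in your remark. Your approach is more elementary in that it avoids Stinespring, but the paper's one-line reduction is shorter and, more importantly, the identity $\|X\|_\pi=\|\rho(X)V\|$ is not incidental: the concrete $\rho$ and $V$ reappear in Section~3 as the building blocks of the Stinespring representation of both $\pi$ and the Hadamard product, so establishing this identity here foreshadows the main construction.
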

\begin{proof}
By Stinespring's theorem \cite{St} there exists a Hilbert space $K$ a representation $\rho$ of $\cc$ on $K$ and a contraction $V$ in $B(H \otimes \ell_2(G),K)$ such that for any $X$ in $\cc$ we have $\pi(X) \, = \, V^*\rho(X) V.$ From here we get via the C*-algebraic norm identity, $\|Z^*Z\| = \|Z\|^2$ that $$\|X\|_{\pi} \, = \, \|(V^*\rho(X)^*\rho(X) V\|^{(1/2)} \, = \, \|\rho(X)V\|.$$ 
With this identity in mind it follows easily that $\|.\|_{\pi}$ is a seminorm and since $\pi$ is faithful in this construction it follows that $\|.\|_{\pi}$ is a norm. 
\end{proof}
We can now extend Plancherel's theorem and the nearest point result.
\begin{proposition} \label{normconv}
Let $X$ be in $\cc,$  $K $ a finite subset of $G$ and $(Y_k)_{k \in K}$ elements in $\Psi(\ca)$  then    $$ \pi\big((X - \sum_{h \in K} L_hY_h )^*(X - \sum_{k \in K} L_kY_k )\big) \geq \pi\big((X - \sum_{h \in K} L_hX_h )^*(X - \sum_{k \in K} L_kX_k )\big) . $$
The series $\sum_{g \in G}L_gX_g$ converges in the norm $\|.\|_\pi$ to $X.$ The series $\sum_{g \in G } X_g^*X_g $  converges in norm to $\pi(X^*X)$ and for  $\hat X : = \sum_{g \notin K}L_gX_g$ we have $$ \|\hat X\|_\pi\, = \,\|\sum_{g \notin K} X_g^*X_g \|^{(1/2)} \, =  \,\underset{ Y \in \mathrm{span}(\{L_k\Psi(A_k)\, :\, k \in K A_k \in \ca\,\})}{\inf}\|X - Y\|_\pi. $$  
\end{proposition}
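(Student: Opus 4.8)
The plan is to regard the conditional expectation as a $\Psi(\ca)$-valued inner product $\langle X,Y\rangle:=\pi(X^{*}Y)$, under which $\cc$ behaves like a Hilbert $C^{*}$-module over $\Psi(\ca)$ in which the subspaces $L_{g}\Psi(\ca)$ form a mutually orthogonal family and $X_{g}=\pi(L_{g}^{*}X)$ is the associated ``Fourier coefficient''. The only facts used are that $\pi$ is a positive contraction and a $\Psi(\ca)$-bimodule map onto $\Psi(\ca)$, and that $\pi(L_{g}Z)=\d(g,e)Z$ for $Z\in\Psi(\ca)$, the latter being immediate from $\pi(L_{g}\Psi(A))=\d(g,e)\Psi(A)$; taking adjoints also gives $\pi(ZL_{g})=\d(g,e)Z$. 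Since $L_{g}^{*}L_{h}=L_{g^{-1}h}$, these yield, for $Y_{g},Z_{h},Z\in\Psi(\ca)$,
$$\pi\big((L_{g}Y_{g})^{*}(L_{h}Z_{h})\big)=Y_{g}^{*}\pi(L_{g^{-1}h}Z_{h})=\d(g,h)\,Y_{g}^{*}Z_{h},\qquad \pi\big((L_{h}Z)^{*}X\big)=Z^{*}\pi(L_{h}^{*}X)=Z^{*}X_{h}.$$

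I would prove the nearest point inequality by completing the square. Put $T:=\sum_{k\in K}L_{k}X_{k}$ and $S:=\sum_{k\in K}L_{k}Y_{k}$, so that $T-S=\sum_{k\in K}L_{k}(X_{k}-Y_{k})$ has coefficients in $\Psi(\ca)$, and expand
$$\pi\big((X-S)^{*}(X-S)\big)=\pi\big((X-T)^{*}(X-T)\big)+\pi\big((X-T)^{*}(T-S)\big)+\pi\big((T-S)^{*}(X-T)\big)+\pi\big((T-S)^{*}(T-S)\big).$$
A short computation with the two displayed relations gives $\pi\big((X-T)^{*}L_{k}Z\big)=X_{k}^{*}Z-X_{k}^{*}Z=0$ for $k\in K$ and $Z\in\Psi(\ca)$, so the two cross terms vanish; positivity of $\pi$ then leaves $\pi((X-S)^{*}(X-S))=\pi((X-T)^{*}(X-T))+\pi((T-S)^{*}(T-S))\geq\pi((X-T)^{*}(X-T))$, which is the asserted inequality. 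The same bookkeeping (take $S=0$) shows $\pi(T^{*}T)=\sum_{k\in K}X_{k}^{*}X_{k}$ and the exact Bessel identity
$$\pi\big((X-T_{K})^{*}(X-T_{K})\big)=\pi(X^{*}X)-\sum_{k\in K}X_{k}^{*}X_{k},\qquad T_{K}:=\sum_{k\in K}L_{k}X_{k},$$
valid for every finite $K\subseteq G$; in particular $\sum_{k\in K}X_{k}^{*}X_{k}\leq\pi(X^{*}X)$.

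The step I expect to be the main obstacle is upgrading this to genuine norm convergence, since a bounded increasing net of positive elements of a $C^{*}$-algebra need not converge in norm --- precisely the phenomenon behind the von Neumann algebra counterexamples of Mercer mentioned earlier. The way around it is to exploit the $C^{*}$-norm. The $*$-algebra generated by $\Psi(\ca)$ and $\{L_{g}:g\in G\}$ equals $\mathrm{span}\{L_{g}\Psi(A):g\in G,\ A\in\ca\}$ (closure under products and adjoints follows from the covariance relation $L_{g}\Psi(\ca)L_{g}^{*}=\Psi(\ca)$), so these ``Fourier polynomials'' are norm dense in $\cc$; and since $\pi$ is contractive, $\|\cdot\|_{\pi}\leq\|\cdot\|$. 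Given $\eps>0$, pick such a $P=\sum_{g\in F}L_{g}\Psi(A_{g})$ with $F$ finite and $\|X-P\|<\eps$. A direct computation gives $\pi(L_{g}^{*}P)=\Psi(A_{g})$ for $g\in F$ and $0$ otherwise, so $P$ is its own finite Fourier sum and lies in $\mathrm{span}\{L_{k}\Psi(\ca):k\in F\}$; hence for any finite $K\supseteq F$ the nearest point inequality gives $\|X-T_{K}\|_{\pi}\leq\|X-P\|_{\pi}\leq\|X-P\|<\eps$, which is exactly the statement that $\sum_{g\in G}L_{g}X_{g}\to X$ in $\|\cdot\|_{\pi}$. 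Feeding this into the Bessel identity yields $\big\|\pi(X^{*}X)-\sum_{k\in K}X_{k}^{*}X_{k}\big\|=\|X-T_{K}\|_{\pi}^{2}\to0$, so $\sum_{g\in G}X_{g}^{*}X_{g}\to\pi(X^{*}X)$ in norm. Finally, for a fixed finite $K$ the convergence just proved identifies the tail as $\hat X=X-T_{K}$, whence $\pi(\hat X^{*}\hat X)=\pi(X^{*}X)-\sum_{k\in K}X_{k}^{*}X_{k}=\sum_{g\notin K}X_{g}^{*}X_{g}$ (a norm-convergent series), giving $\|\hat X\|_{\pi}=\|\sum_{g\notin K}X_{g}^{*}X_{g}\|^{1/2}$; and since every element of $\mathrm{span}\{L_{k}\Psi(A_{k}):k\in K\}$ has the form $\sum_{k\in K}L_{k}Y_{k}$, the nearest point inequality shows the infimum of $\|X-Y\|_{\pi}$ over that span is attained at $T_{K}$ and equals $\|\hat X\|_{\pi}$.
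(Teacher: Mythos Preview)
Your proof is correct and follows essentially the same approach as the paper's: both expand $\pi\big((X-S)^{*}(X-S)\big)$ using the $\Psi(\ca)$-bimodularity of $\pi$, complete the square to obtain the Bessel identity $\pi\big((X-T_{K})^{*}(X-T_{K})\big)=\pi(X^{*}X)-\sum_{k\in K}X_{k}^{*}X_{k}$, and then combine norm-density of the Fourier polynomials with the nearest-point inequality to get convergence. Your Hilbert $C^{*}$-module framing (orthogonality of the $L_{g}\Psi(\ca)$ and vanishing of the cross terms) is a cleaner way to phrase exactly the same computation the paper carries out directly.
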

 \begin{proof}
  Remember that $\pi$ is an $\Psi(\ca)-$bimodular map, so we have 
 \begin{align} \label{positive}
 & \, \pi\big((X - \sum_{h \in K} L_hY_h)^*(X - \sum_{k \in K}L_kY_k)\big)\notag \\  = \,& \pi(X^*X) - \sum_{k \in K} X_k^*Y_k - \sum_{h\in K}Y_h^* X_h + \sum_{h \in K} Y_h^*Y_h \notag\\  
 =\,& \pi(X^*X) - \sum_{g \in K}X_g^*X_g. + \sum_{k \in K}(X_k - Y_k)^*(X_k - Y_k) \notag \\ 
\geq  \,& \pi(X^*X) - \sum_{g \in K} X^*_gX_g \notag \\ = \, & \pi\big( (X - \sum_{h \in K}L_hX_h)^*(X - \sum_{k \in K }L_kX_k)\big)\notag \\ = \,& \pi(\hat X^* \hat X) \, \geq \, 0.
\end{align}  
where the second last equality sign again is based on the $\Psi(\ca)$ modularity of $\pi.$ 
By the definition of $\cc$ there exists to any $\eps > 0$ a finite subset $L$ of $G$ and for any $g$ in $L$ an operator $Y_g$ in $\Psi(\ca)$ such that $\|X - \sum_{g \in L}L_g Y_g\| < \eps,$ and we may with the help of the inequality above estimate that for any finite subset $K$ containing $L$ 

\begin{align}
\eps^2I_{B\big(H\otimes \ell_2(G)\big)}\,& \geq \, \pi\big(( X - \sum_{h \in L} L_hY_h)^*(X - \sum_{k \in L} L_kY_k)\big)\notag \\ &\geq \,  \pi\big( (X - \sum_{h \in L}L_hX_h)^*(X - \sum_{k \in L }L_kX_k)\big)\notag \\ &\geq \,\pi\big( (X - \sum_{h \in K}L_hX_h)^*(X - \sum_{k \in K }L_kX_k)\big) \, \geq  \,0,
 \end{align}  
so the convergence of the series $\sum_gL_gX_g$ in the $\|.\|_\pi$ norm follows immediately and the norm convergence of the series $\sum_g X_g^*X_g $ follows since the inequalities above imply that there exists a finite subset $L$ such that 
$$ \eps^2 I_{B\big(H\otimes \ell_2(G)\big)} \,  \geq  \,\pi(X^*X) - \sum_{g \in L} X^*_gX_g \, \geq \, 0,$$ 
hence  for any finite subset $K$ of $G$ which contains $L$ 
$$ \eps^2 I_{B\big(H\otimes \ell_2(G)\big)} \,  \geq \pi(X^*X) - \sum_{g \in K} X^*_gX_g \, \geq \, 0,$$ so the proposition follows .
\end{proof}

\section{The Hadamard product  in C$^*_r(\ca, \a , G).$}
We may now formulate and prove the main result of this article, and we will use the projection $\pi$ from $\cc$ onto $\Psi(\ca)$  in the formulation of the theorem. We will also use the term {\em non degenerate } for the C*-algebra $\ca$  to express that span$(\ca H)$ is dense in $H.$ Since the construction below is very concrete and not an abstract existence result, we prefer to define these concretely given  items now, and then let them be part of the formulation of the theorem. 

\begin{defn} 
Let $K$ be the Hilbert space given by $ K:= \ell_2(G) \otimes H \otimes \ell_2(G),$ $(x_g)_{g \in G} $ the standard orthonomal basis for $\ell_2(G)$  and $\rho : B\big(H \otimes \ell_2(G)\big) \to B(K) $ the representation given by the amplification \newline $\rho(X) \, : = \, I_{B(\ell_2(G))} \otimes X.    $ \newline The isometry $ V : H \otimes \ell_2(G) \to K$ is given on  elementary tensors by \newline $V( \xi \otimes x_g) := x_g \otimes \xi \otimes x_g.$ \newline 
The self-adjoint unitary $F$ on $K$ is given on elementary tensors by \newline $F(x_g \otimes \xi \otimes x_h) \, := \, x_h \otimes \xi \otimes x_g.$ 
\end{defn} 
\begin{theorem}
Let $H$ be a Hilbert space, $\ca $  a non degenerate C*-algebra on $H, $   $G$ a discrete group  and $g \to \a_g $ a homomorphism of $G$ into the the group of *-automorphisms of $\ca.$ The faithful completely positive projection $\pi : \cc \to \Psi(\ca)$ is given by  $\pi(X) \, = \, V^*\rho(X)V$ and the expression $X\star Y$ given on $\cc \times \cc$ by 
$$X \star Y := V^* \rho(X) F \rho(Y) V$$ defines a completely contractive associative product on $\cc$ which satisfies the following statements    
\begin{itemize}

\item[(i)] $\forall X,Y \in \cc : \quad (X \star Y)^*\, = \,Y^* \star X^*.$
\item[(ii)] $\forall X, Y \in \cc \, \forall A \in \ca : \quad (X \star Y)\Psi(A) \,= \, X\star (Y\Psi(A))\text{ and } \Psi( A)(X \star Y)\,= \, (\Psi(A)X)\star Y.$
\item[(iii)] $\forall X, Y \in \cc, \, \forall g \in G: \quad \pi\big(L_g^*(X \star Y)\big) \, = \, \pi(L_g^*X) \pi(L_g^*Y).$
\item[(iv)]  $ \forall X, Y \in \cc\, \forall \xi, \g \in H: \, \, |\langle (X \star Y)\xi, \g\rangle | \leq \|\pi(XX^*)^{(1/2)}\g\|\|\pi(Y^*Y)^{(1/2)}\xi\|.$  
\item[(v)]  $ \forall X, Y \in \cc:$ The sum $\sum_{g \in G}\, L_g \pi(L_g^*X) \pi(L_g^*Y)$ converges in the norm  topology to $ X \star Y.$ \item[(vi)]
 $\forall X, Y  \in \cc  \text{ there exists  a  contraction } S(X,Y) \text{ on } H \otimes \ell_2(G) \text{ s. t. } \newline  X \star Y \,= \, \pi(XX^*)^{(1/2)} S(X,Y) \pi(Y^*Y)^{(1/2)}.$ 
\item[(vii)]
 $\forall X  \in \cc: \,\, -\pi(X^*X) \leq X^* \star X \,\leq \, \pi(X^*X) .$  
  \end{itemize}
\end{theorem}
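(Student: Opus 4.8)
The plan is to push everything down to the concrete Hilbert space $K=\ell_2(G)\otimes H\otimes\ell_2(G)$ and to compute on elementary tensors. First I would note that $V$ is an isometry, that $F=F^{*}=F^{-1}$, and that $\rho(\Psi(A))V=V\Psi(A)$ and $\Psi(A)V^{*}=V^{*}\rho(\Psi(A))$, all read off immediately from the definitions of $V,F,\rho$. Writing a bounded operator $Z$ on $H\otimes\ell_2(G)$ as a matrix $(Z_{h,k})_{h,k\in G}$ over $B(H)$ through $Z(\xi\otimes x_{k})=\sum_{h}(Z_{h,k}\xi)\otimes x_{h}$, a direct evaluation of $V^{*}\rho(X)F\rho(Y)V$ on $\xi\otimes x_{k}$ yields $(X\star Y)_{h,k}=X_{h,k}Y_{h,k}$, while $V^{*}\rho(Z)V$ comes out as the diagonal operator $\mathrm{diag}(Z_{m,m})_{m}$. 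On the generators $L_{g}\Psi(A)$ this last expression agrees with $\pi$, so, both maps being bounded and linear while the $*$-subalgebra $\cc_{0}:=\mathrm{span}\{L_{g}\Psi(A):g\in G,\ A\in\ca\}$ is norm-dense in $\cc$, we get $\pi=V^{*}\rho(\cdot)V$ on $\cc$. The same matrix formula gives $(L_{g}\Psi(A))\star(L_{h}\Psi(B))=\delta(g,h)\,L_{g}\Psi(AB)$, so $\star$ carries $\cc_{0}\times\cc_{0}$ into $\cc_{0}$; since $\|V^{*}\rho(X)F\rho(Y)V\|\le\|X\|\,\|Y\|$ the bilinear map $\star$ is jointly norm-continuous, whence $X\star Y\in\overline{\cc_{0}}=\cc$ for all $X,Y\in\cc$. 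Complete contractivity then follows from $\star^{(n)}([X_{ij}],[Y_{ij}])=(I_{n}\otimes V^{*})\,\rho^{(n)}([X_{ij}])\,(I_{n}\otimes F)\,\rho^{(n)}([Y_{ij}])\,(I_{n}\otimes V)$ together with $\|V\|=\|F\|=1$ and the complete contractivity of the $*$-homomorphism $\rho$.

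With the Stinespring form in hand the listed statements are short. Item (i) is the adjoint of the defining formula together with $F^{*}=F$ and $\rho(Z)^{*}=\rho(Z^{*})$; item (ii) comes from pushing $V\Psi(A)=\rho(\Psi(A))V$ and $\Psi(A)V^{*}=V^{*}\rho(\Psi(A))$ through $X\star Y=V^{*}\rho(X)F\rho(Y)V$. For (iii) the matrix formula shows at once that $\pi(L_{g}^{*}(X\star Y))$ and $\pi(L_{g}^{*}X)\,\pi(L_{g}^{*}Y)$ are the same diagonal operator; associativity then drops out, since by (iii) both $(X\star Y)\star Z$ and $X\star(Y\star Z)$ have $g$-th coefficient $\pi(L_{g}^{*}X)\pi(L_{g}^{*}Y)\pi(L_{g}^{*}Z)$ for every $g$, and Propositions \ref{norm} and \ref{normconv} show that an element of $\cc$ is determined by its coefficients. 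For (vi) I would set $A:=\rho(X^{*})V$ and $B:=F\rho(Y)V$, so that $X\star Y=A^{*}B$ with $A^{*}A=\pi(XX^{*})$ and $B^{*}B=\pi(Y^{*}Y)$; the polar decompositions $A=U_{A}(A^{*}A)^{1/2}$ and $B=U_{B}(B^{*}B)^{1/2}$ give $X\star Y=\pi(XX^{*})^{1/2}\,S(X,Y)\,\pi(Y^{*}Y)^{1/2}$ with $S(X,Y):=U_{A}^{*}U_{B}$ a contraction on $H\otimes\ell_2(G)$, and (iv) is the Cauchy--Schwarz estimate for $A^{*}B$ (equivalently, it is read off from (vi)). Finally (vii) is immediate: $F=F^{*}$ and $F^{2}=I$ force $-I\le F\le I$, hence $-\rho(X^{*}X)\le\rho(X)^{*}F\rho(X)\le\rho(X^{*}X)$, and compressing by $V^{*}(\cdot)V$ gives $-\pi(X^{*}X)\le X^{*}\star X\le\pi(X^{*}X)$.

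The one point requiring genuine work is the \emph{norm} convergence in (v), since Proposition \ref{normconv} supplies convergence of the generalized Fourier series only in the weaker norm $\|\cdot\|_{\pi}$. Here, for a finite $K\subseteq G$, I would put $\hat X_{K}:=X-\sum_{g\in K}L_{g}\pi(L_{g}^{*}X)\in\cc$ and likewise $\hat Y_{K}$, and use the matrix picture to establish the exact identity $X\star Y-\sum_{g\in K}L_{g}\,\pi(L_{g}^{*}X)\pi(L_{g}^{*}Y)=\hat X_{K}\star\hat Y_{K}$ (both sides have $(h,k)$-entry $X_{h,k}Y_{h,k}$ when $hk^{-1}\notin K$ and $0$ otherwise, using (iii) to identify the coefficients of $X\star Y$). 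Feeding $\hat X_{K}\star\hat Y_{K}$ into the Livshits-type inequality contained in (iv) bounds this remainder by $\|\pi(\hat X_{K}\hat X_{K}^{*})\|^{1/2}\,\|\hat Y_{K}\|_{\pi}$. Since, in the matrix picture, $\pi(ZZ^{*})$ is the diagonal operator with $m$-th entry $\sum_{k}Z_{m,k}Z_{m,k}^{*}$, the diagonal entries of $\pi(\hat X_{K}\hat X_{K}^{*})$ are sub-sums of those of $\pi(XX^{*})$, so $0\le\pi(\hat X_{K}\hat X_{K}^{*})\le\pi(XX^{*})$ and the first factor is at most $\|\pi(XX^{*})\|^{1/2}$; meanwhile $\|\hat Y_{K}\|_{\pi}\to 0$ by Proposition \ref{normconv}. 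Hence the remainder tends to $0$ in operator norm, which is (v). I expect this last step — converting the truncation error of the convolution into a genuine Hadamard product of the tails and then estimating it via Livshits' inequality — to be the main obstacle; everything else is bookkeeping with elementary tensors.
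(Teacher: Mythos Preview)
Your argument is correct and follows the paper's approach closely: verify $\pi=V^{*}\rho(\cdot)V$ and the product formula for $\star$ on generators (you do this via the matrix entries $(Z_{h,k})$, the paper via elementary tensors and $L_s\Psi(A)$), extend to $\cc$ by density and continuity, obtain (vi) by polar decomposition of $\rho(X^{*})V$ and $\rho(Y)V$, and for (v) identify the truncation error with $\hat X_{K}\star\hat Y_{K}$ and bound it through the Livshits-type inequality coming from (iv). The only differences are economies of ordering: you read (iv) off directly from $X\star Y=A^{*}B$ (equivalently from (vi)) instead of the paper's Fourier-series Cauchy--Schwarz computation, you get (vii) by compressing the operator inequality $-I\le F\le I$ with $V^{*}\rho(X)^{*}(\cdot)\rho(X)V$ rather than via (vi), and in (v) you bound $\|\pi(\hat X_{K}\hat X_{K}^{*})\|$ uniformly by $\|\pi(XX^{*})\|$ and let only $\|\hat Y_{K}\|_{\pi}\to 0$, whereas the paper makes both tail factors small via Proposition~\ref{normconv}; all of these are legitimate and slightly cleaner.
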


\begin{proof}
 
To see that $\pi(.) \, = \, V^*\rho(.)V $ we will set the stage with some easy remarks and some notation. Let  $\cc_0$ be the linear span of all the operators $\{L_g\Psi(A)\, : \, g \in G,\, A \in \ca\,\}$  and let $X, Y$  in $\cc_0$ be given as  finite sums $X \, = \, \sum_g L_g X_g $ and $Y \, = \, \sum_g L_gY_g, $
then we see that $$ XY \, = \, \sum_g\sum_h L_gX_gL_hY_h \, = \,   \sum_g \sum_h L_{gh} \a_{h^{-1}}(X_g)Y_h \, = \, \sum_k L_k \big( \sum_h \a_{h^{-1}}( X_{kh^{-1}})Y_h\big),$$
and 
 $$X^* \, = \, \sum_g X_g^* L_g^* \, =\, \sum_g L_g^* \a_g(X_g^*) \, = \, \sum_k L_k\a_{k^{-1}}(X_{k^{-1}}^*), $$ so $\cc_0$  is a *-subalgebra of  $\cc,$ which is the norm closure of $\cc_0.$
To see that the conditional expectation $\pi(.) $ equals $V^*\rho(.)V$ on $\cc$ we will perform the computations on an operator $X \, = \, L_g \Psi(A)$ in $\cc_0, $ and we may compute as follows,
\begin{align}
\langle V^*\rho\big(L_g\Psi(A)\big)V ( \xi \otimes x_h),(\g \otimes  x_k ) \rangle \,& = \, \langle \big(I_{\ell_2(G)} \otimes L_g \Psi(A)\big) x_h \otimes \xi \otimes x_h,  x_k \otimes \g \otimes x_k \rangle \notag \\
& = \, \d(h,k) \langle  L_g ( \a_{h^{-1}}(A) \xi \otimes x_h), (\g \otimes x_k)  \rangle \notag \\ 
& = \, \d(h,k) \langle   (\a_{h^{-1}}(A) \xi \otimes x_{gh}),( \g \otimes x_k)  \rangle \notag \\ 
& = \, \d(h,k)  \delta(g,e)\langle \a_{h^{-1}}(A) \xi,\g  \rangle \notag \\  
& = \,    \delta(g,e) \langle \Psi(A) (\xi \otimes x_{h}), (\g \otimes x_k)  \rangle \notag \\ 
& = \,  \langle  \pi\big(L_g \Psi(A)\big) (\xi \otimes x_{h}), (\g \otimes x_k)  \rangle
\end{align}

Since $\pi(.) $ and $V^*\rho(.)V$ both are continuous mappings we see that for any $X$ in $\cc$ we have $\pi(X) \, = \, V^*\rho(X)V.$
It is well known that $\pi$ is $\Psi(\ca)$ bimodular, but we need a version of this which seems a bit different so we prove that for any $A$ in $\ca$ we have 
\begin{equation}
\forall A \in \ca: \quad \rho\big(\Psi(A)\big)V \, = \, V \Psi(A), \label{module}
\end{equation}
and the proof of this follows from the following line 
$$\rho\big(\Psi(A)\big)V (\xi \otimes x_k) \, = \,( x_k \otimes \a_{k_{-1}}(A)\xi \otimes x_k) \, = \, V (\a_{k_{-1}}(A)\xi \otimes x_k)\, = \, V \Psi(A) (\xi \otimes x_k).$$

We may then begin to show the validity of all the claims. The item (i) follows directly from the definition of the bilinear operator $\star .$ 

The item (ii) 
follows  from the definition of $\star$ and the equation (\ref{module}).
 
With respect to (iii) we will first establish the result in $\cc_0,$ by showing  that for $X \, = \, L_s\Psi(A)$ and $Y\, = \, L_t\Psi(B)$ in $\cc_0 $ we have $X \star Y \, = \, \delta(s,t) L_s \Psi(AB).$ We do this via the following computations, 
\begin{align}
&\, \langle (X \star Y) (\xi \otimes x_h)), (\g \otimes x_k)\rangle \notag \\  
 = &\, \langle V^* \rho\big(L_s\Psi(A)\big)F\rho\big( L_t\Psi(B)\big)V (\xi \otimes x_h)), (\g \otimes x_k)\rangle \notag \\ 
  = & \, \langle F \rho\big( L_t\Psi(B)\big) (x_h \otimes \xi \otimes x_h),\rho\big(\Psi(A^*)L_{s^{-1}}\big) (x_k \otimes \g \otimes x_k)\rangle \notag \\
 = & \, \langle F  (x_h \otimes  \a_{h^{-1}}(B)\xi \otimes x_{th}), (x_k \otimes \a_{k^{-1}s}(A^*)\g \otimes x_{s^{-1}k})\rangle \notag \\ 
 = & \, \langle  (x_{th} \otimes  \a_{h^{-1}}(B)\xi \otimes x_{h}), (x_k \otimes \a_{k^{-1}s}(A^*)\g \otimes x_{s^{-1}k})\rangle \notag \\  
  = & \,\d(th, k) \d(h, s^{-1}k)\langle  a_{h^{-1}}(B)\xi ,  \a_{k^{-1}s}(A^*)\g \rangle \notag \\ 
 = & \,\d(th,k)\d(h, s^{-1}k)\langle  \a_{h^{-1}}(AB)\xi ,  \g \rangle \notag \\ 
  = & \,\d(s,t) \d(sh,k) \langle a_{h^{-1}}(AB)\xi ,  \g \rangle \notag \\ 
   = & \,\langle \d(s,t)\big(   L_s\Psi(AB)\big)(\xi \otimes x_h),  (\g \otimes x_k)  \rangle . 
\end{align}
Hence for any pair $X, Y$ in $\cc_0$ and any $g$ in $G $ we have \begin{equation}
\pi\big(L_g^*(X \star Y)\big) \, = \, \pi(L_g^*X)\pi(L_g^*Y),
\label{Hada} \end{equation}
and by continuity we conclude that (iii) is valid. 

 The inequality (iv) implies Livshits' inequality \cite{Li}, which was fundamental  in establishing  estimates  on norms of commutators in \cite{C0}. When, in \cite{C1} section 3,  we first realized the validity of (iv) - for Schur products - we were unaware of the fact that Horn and Johnson in \cite{HJ} proof of Theorem 5.5.3 presents the same calculations as you will see below in a slightly different form.  
Now let $\xi, \g$ be in $H \otimes \ell_2(G),$ then  with the aid of equation (\ref{Hada}) and  two different versions  the Cauchy-Schwarz inequality, we get for any pair $X, \, Y$ in $\cc_0$  
\begin{align}  \notag
|\langle (X\star Y)\xi, \g \rangle | & =| \sum_g \langle \pi(L_g^*Y) \xi , \pi(X^*L_g)L_g^* \g \rangle | \\ \notag &\leq \sum_g \|\pi(L_g^*Y)\xi\| \|\pi(X^*L_g)L_g^*\g\| \\ \notag &\leq \big(\sum_g \|\pi(L_g^*Y) \xi\|^2\big)^{(1/2)} \big(\sum_g \|\pi(X^*L_g)L_g^* \g\|^2\big)^{(1/2)} \\\notag
& = \big(\sum_g \| Y_g \xi\|^2\big)^{(1/2)} \big(\sum_g \|X_g^*L_g^* \g\|^2\big)^{(1/2)} \\ \notag
& = \big( \langle \sum_g  Y_g^*Y_g \xi, \xi \rangle \big)^{(1/2)} \big( \langle \sum_g L_gX_gX_g^*L_g^* \g, \g \rangle \big)^{(1/2)} \\ \notag
& = \big( \langle \pi(Y^*Y) \xi, \xi \rangle \big)^{(1/2)} \big( \langle \pi(XX^*) \g, \g \rangle \big)^{(1/2)}\\ \label{BasIneq}
& = \|\pi(Y^*Y)^{(1/2)} \xi\| \|\pi(XX^*)^{(1/2)} \g\| \\ \notag
& \leq \|\pi(XX^*)^{(1/2)}\|\|\pi(Y^*Y)^{(1/2)}\|  | \|\xi\| \|\g\|
\end{align}

We know that the product $\star$ is continuous so we may extend the inequalities above to all of $\cc,$ and the statement (iv) follows.

To prove item (v) we let $X = \sum_gL_gX_g$ and $ Y = \sum_g L_gY_g$  be a pair of operators in $\cc$ and  $\eps $ a positive real number. Then $X^* = \sum_g L_g^*( L_gX_g^*L_g^*)$ and by Proposition \ref{normconv} there exists a finite subset $K$ of $G$ such that 
$$\| \sum_{ g \notin K} L_gX_gX_g^*L_g^* \| \leq \eps^2 \text{ and } \|\sum_{g \notin K} Y_g^*Y_g \|\leq \eps^2 .$$ 
We may  define $\hat X := \sum_{g \notin K } L_gX_g ,$ $\hat Y := \sum_{g \notin K } L_gX_g $ and $\widehat{X \star Y} : =  \hat X \star \hat Y = \sum_{g \notin K } L_gX_g Y_g.$ Let  $\xi, \g$ be a pair of vectors in $H, $  then the inequalities from (\ref{BasIneq}) may be reused on the product $\hat X \star \hat Y$ to obtain $$|\langle (\hat X \star \hat Y) \xi , \g \rangle | \leq \big( \langle \sum_{g \notin K}  Y_g^*Y_g \xi, \xi \rangle \big)^{(1/2)} \big( \langle \sum_{g \notin K} L_gX_gX_g^*L_g^* \g, \g \rangle \big)^{(1/2)} \leq \eps^2 \|\xi\|\|\g \|. $$  so $\|\widehat{X \star Y }\| \leq \eps^2$ and the series $\sum_g L_gX_gY_g$ is norm convergent with sum $X\star Y,$ and statement (v) follows.

With respect to item (vi), we remark that for $X, Y $ in $\cc$ we have by definition  $ X \star Y \,  = \, V^* \rho(X) F \rho(Y) V.$ The operator $\rho(Y)V$  has a polar decomposition such that there exist a partial isometry $W_Y$ in $B\big(H \otimes \ell_2(G),K\big) $ for which $ \rho(Y)V = W_Y \pi(Y^*Y)^{(1/2)}.  $ In a similar way we may find a partial isometry $ W_X $ in $B\big(K,H \otimes \ell_2(G)\big) $ such that $\pi(XX^*)^{(1/2)} W_X  = V^*\rho(X),$ and for the contraction $ S(X,Y) $ in $B(H \otimes \ell_2(G))$ which is defined by 
$S(X,Y) \, := \, W_XFW_Y$ we then have
\begin{align*} X \star Y \,&= \,V^*\rho(X) F \rho(Y)V \\ & = \, \pi(XX^*)^{(1/2)} W_X F W_Y  \pi(Y^*Y)^{(1/2)} \, = \, \pi(XX^*)^{(1/2)}   S(X,Y) \pi(Y^*Y)^{(1/2)},
\end{align*} and item (vi) follows. 

The statement (vii) follows from (vi), since the operator $S(X^*,X)$ will be a self-adjoint contraction, and the theorem follows.
\end{proof}

 \section{ The block Schur product as a  Hadamard  product} 
 
The block Schur product of two  matrices $A = (a_{ij} )$ and $B = (b_{ij} )$  with entries in an operator algebra $\ca$ is defined as the block matrix $A \square B:= (a_{ij}b_{ij}). $ We will take a look at a particular case of a covariant representation of a crossed product, which turns out to give us the block Schur product on $M_n\big(B(H)\big).$ So let $n $ be a natural number, $H$ a Hilbert space and 
 $C_n := \bz / n\bz$  denote the cyclic group of order $n.$   Define  $L : = H \otimes \ell_2(C_n) $ and let the C*-algebra $\ca $ on $L$ to be given as  $B(H) \otimes \ell_\infty(C_n)\, = \, \ell_\infty\big(C_n, B(H)\big),$  which we will think of  as the diagonal operators in $M_{C_n}\big(B(H)\big).$ The action of $C_n$ on $\ca$ is given as $\a_f(A)(g) := A(g-f)$ and a unitary representation of $C_n$ on $L$ implementing this group of automorphisms is given by $(L_f\Xi)(g) := \Xi(g-f).$ The C*-algebraic crossed product $\cc$ generated by this covariant representation $(\ca,  L_g, L)$ of the C*-dynamical system $(\ca, \a_g, C_n) $ is nothing but \newline $M_{C_n}\big(B(H)\big) = B(H) \otimes M_{C_n}(\bc) $ on $H \otimes \ell_2(C_n) .$ It is then well known that $M_n\big(B(H)\big)$ will be isomorphic to C$^*_r(\ca, \a_g, C_n).$  
 
 The projection $\pi$ of $\cc$  onto $\ca$ is in this case the diagonal projection, i.e. for $X=(x_{ij} )$ in $M_{C_n}\big(B(K)\big)$ we have  $\pi(X)_{kl} = \d(k,l)x_{kk}.$  By the theory an $X$ in $\cc$ may be written as $$ X = \sum_{g \in C_n} L_gX_g \text{ with } X_g \text{ a diagonal matrix }, $$ 
 and the Hadamard product for $X = \sum_g L_gX_g$ and $Y = \sum_g L_gY_g $ is given as \begin{equation} \label{H=S} 
  X \star Y = \sum_{g \in C_n}  L_g X_g Y_g.
  \end{equation} 
Since the operators $L_g$ have matrices which have the element $I$ as entries in the $g'$ th diagonal and zero entries  elsewhere, the equation (\ref{H=S}) implies that this Hadamard product is nothing but the block Schur product.


\begin{thebibliography}{10}

\bibitem{BC1}
E. B{\'e}dos, R. Conti, {\em On discrete twisted C*-dynamical systems,
Hilbert C*-modules and regularity, } M{\"u}nster J.  Math. {\bf 5} (2012), 183--208.

\bibitem{BC2}
E. B{\'e}dos, R. Conti, {\em Fourier Series and twisted C*-crossed products, } J. Fourier Anal. App. {\bf 21} (2015), 32--75.




\bibitem{C0}
E. Christensen,{\em Commutator Inequalities via Schur Products,} Abel Symp. 2015, 127 -- 144,
Springer 2016.

\bibitem{C1}
E. Christensen,
{\em On the complete boundedness of the Schur block  product,}
Proc. Amer. Math. Soc. {\bf 147} (2019), 523 -– 532.
https://doi.org/10.1090/proc/14202

\bibitem{C2}
E. Christensen,
{\em 
Decompositions of Schur block products,} preprint, 

\bibitem{CS}
E. Christensen, A. M. Sinclair,
{\em Representations of completely bounded multilinear operators,} 
J.  Funct. Anal. {\bf 72} (1987). 151--181.


\bibitem{Ha} 
J. Hadamard, {\em Th\'eor\`eme sur les s\'eries enti\`eres,} Acta Math. {\bf 22} (1899), 55 -- 63.


\bibitem{Ho}
R. A. Horn, {\em The Hadamard product,} 
Proc. Sympos. Appl. Math. {\bf 40} (1990), 87 -- 169.  



\bibitem{HJ}
R. A. Horn, C. R. Johnson, {\em Topics in matrix analysis,} Cambridge U. Press, 1991.  

\bibitem{HM}
R. A. Horn, R. Mathias,  {\em Block-matrix generalizations of Schur's basic theorems on Hadamard products, } 
Lin. Alg. App. {\bf 172 } (1992), 337 -- 346. 

\bibitem{KR} R. V. Kadison, J. R. Ringrose, {\em Fundamentals of the theory of operator algebras,} Academic Press, 1986.


\bibitem{Me}
R. Mercer, {\em Convergence of Fourier series in discrete crossed products of von Neumann algebras,} Proc. Amer. Math. Soc. {\bf 94} (1985), 254 -- 258.

\bibitem{Li}
L. Livshits, 
{\em Block-matrix generalizations of infinite-dimensional
Schur products and Schur multipliers,}
 Lin. Multilin. Alg. {\bf 38} (1994), 59--78.

\bibitem{RS}
M. R{\o}rdam, A. Sierakowski, {\em Purely infinite C*-algebras arising from
crossed products, }
Ergod. Th. \& Dynam. Sys. {\bf 32} (2012),  273–-293. 


\bibitem{Pe} G. K. Pedersen, {\em C*-algebras and their automorphism groups, } second edition, edited by S{\o}ren Eilers and Dorte Olesen,
Academic Press, 2018.

\bibitem{Sc} J. Schur, {\em Bemerkungen zur Theorie der beschr\"{a}nkten Bilinearformen mit unendlich vielen Ver\"{a}nderlichen, } J. Reine Angew. Math. { \bf 140 } (1911), 1 -- 28. 

\bibitem{St} W. F. Stinespring, {\em Positive Functions on C*-algebras, } Proc. Amer. Math. Soc.  {\bf 6} (1955), 211 -- 216.

\bibitem{Wi} D. P. Williams, {\em Crossed products of C*-algebras,} Math. Surv. {\bf 134 } (2007), Amer. Math. Soc.


\bibitem{Ze}
G. Zeller-Meier, {\em Produits crois{\'e}s d'une C∗-alg{\`e}bre par un groupe d'automorphismes,} J.
Math. Pures Appl.  {\bf 47} (1968), 101--239.

\end{thebibliography}
\end{document}